\definecolor{labelkey}{rgb}{0,0.08,0.45}
\definecolor{refkey}{rgb}{0,0.6,0.0}
\definecolor{Brown}{rgb}{0.45,0.0,0.05}
\definecolor{lime}{rgb}{0.00,0.8,0.0}
\definecolor{lblue}{rgb}{0.5,0.5,0.99}
\newcommand{\dir}{\ensuremath{\operatorname{dir}}}
\newcommand{\hzn}{\ensuremath{\operatorname{hzn}}}
\newcommand{\csm}{\ensuremath{\operatorname{csm}}}
\newcommand{\nnn}{\ensuremath{{n\in{\mathbb N}}}}
\newcommand{\menge}[2]{\big\{{#1}~\big |~{#2}\big\}}
\newcommand{\To}{\ensuremath{\rightrightarrows}}
\newcommand{\fenv}[1]%
{\ensuremath{\,\overrightarrow{\operatorname{env}}_{#1}}}
\newcommand{\benv}[1]%
{\ensuremath{\,\overleftarrow{\operatorname{env}}_{#1}}}
\newcommand{\scal}[2]{\left\langle{#1},{#2}  \right\rangle}
\newcommand{\RR}{\ensuremath{\mathbb R}}
\newcommand{\RPP}{\ensuremath{\mathbb{R}_{++}}}
\newcommand{\RMM}{\ensuremath{\mathbb{R}_{--}}}
\newcommand{\RX}{\ensuremath{\,\left]-\infty,+\infty\right]}}
\newcommand{\ran}{\ensuremath{\operatorname{ran}}}
\newcommand{\cran}{\ensuremath{\overline{\operatorname{ran}}\,}}
\newcommand{\Fix}{\ensuremath{\operatorname{Fix}}}
\newcommand{\Id}{\ensuremath{\operatorname{Id}}}
\newcommand{\minf}{\ensuremath{-\infty}}
\newcommand{\pinf}{\ensuremath{+\infty}}
\crefname{equation}{}{equations}
\crefname{chapter}{Appendix}{chapters}
\crefname{item}{}{items}
\crefname{figure}{Figure}{figures}
\newtheorem{theorem}{Theorem}[section]
\newtheorem{lemma}[theorem]{Lemma}
\newtheorem{corollary}[theorem]{Corollary}
\newtheorem{example}[theorem]{Example}
\newtheorem{fact}[theorem]{Fact}
\providecommand{\norm}[1]{\lVert#1\rVert}
\providecommand{\innp}[1]{\langle#1\rangle}
\providecommand{\RR}{\mathbb{R}}
\providecommand{\ran}{\operatorname{ran}}
\providecommand{\cone}{\operatorname{cone}}
\providecommand{\ccone}{\overline{\operatorname{cone}}}
\providecommand{\Id}{\operatorname{{ Id}}}
\providecommand{\To}{\rightrightarrows}
\providecommand{\ran}{\operatorname{ran}}
\providecommand{\rec}{\operatorname{rec}}
\providecommand{\Id}{\operatorname{Id}}
\providecommand{\nnn}{{n\in\NN}}
\providecommand{\RR}{\mathbb{R}}
\definecolor{myblue}{rgb}{.8, .8, 1}
  \newcommand*\mybluebox[1]{%
    \colorbox{myblue}{\hspace{1em}#1\hspace{1em}}}
\begin{document}
%
\title{\textsc
On a result of Pazy concerning the asymptotic behaviour of 
nonexpansive mappings}
\author{
Heinz H.\ Bauschke\thanks{
Mathematics, University
of British Columbia,
Kelowna, B.C.\ V1V~1V7, Canada. E-mail:
\texttt{heinz.bauschke@ubc.ca}.},~
Graeme R.\ Douglas\thanks{Computer Science,
University
of British Columbia,
Kelowna, B.C.\ V1V~1V7, Canada. E-mail: 
\texttt{graeme.r.doug@gmail.com.}}, 
~and Walaa M.\ Moursi\thanks{
Mathematics, University of
British Columbia,
Kelowna, B.C.\ V1V~1V7, Canada. E-mail:
\texttt{walaa.moursi@ubc.ca}.}}
\date{May 15, 2015}
\maketitle
\begin{abstract}
\noindent
In 1971, Pazy presented a beautiful trichotomy result concerning
the asymptotic behaviour of the iterates of a nonexpansive
mapping. In this note, we analyze the fixed-point free case in more
detail. Our results and examples give credence to the conjecture that the
iterates always converge cosmically. 
\end{abstract}
{\small
\noindent
{\bfseries 2010 Mathematics Subject Classification:}
{Primary 
47H09, 
Secondary 
90C25. 
}

\noindent {\bfseries Keywords:}
Cosmic convergence, 
firmly nonexpansive mapping, 
nonexpansive mapping, 
Poincar\'e metric, 
projection operator.
}
\section{Introduction}
Throughout, 
$X$ is a finite-dimensional real Hilbert space 
with inner product $\innp{\cdot,\cdot}$ and
induced norm $\norm{\cdot}$, and 
$T\colon X\to X$ is \emph{nonexpansive}, i.e., 
$(\forall x\in X)(\forall y\in X)$
$\|Tx-Ty\|\leq\|x-y\|$. 
Then, using \cite{Pazy}, the vector
\begin{empheq}[box=\mybluebox]{equation}
\label{e:v}
v := P_{\cran(\Id-T)}(0)
\end{empheq} 
is well defined. 
The following remarkable result\footnote{In fact, \cref{f:Pazy}
holds in general Hilbert space. See also  \cite{Reich73},
\cite{Reich81}, \cite{Reich82} and \cite{PlantReich} for even
more general settings. We thank Simeon Reich for bringing these
references to our attention.} was proved by A.~Pazy in 1971. 
\begin{fact}[\textbf{Pazy's trichotomy; \cite{Pazy}}]
\label{f:Pazy}
Let $x\in X$. Then 
\begin{equation}
\label{e:Pazy}
\lim_{n\to\infty} \frac{T^nx}{n} = -v.
\end{equation}
Moreover, exactly one of the following holds:
\begin{enumerate}
\item 
\label{f:Pazy1}
$0\in\ran(\Id-T)$,
and $(T^nx)_\nnn$ is bounded for every $x\in X$. 
\item
\label{f:Pazy2}
$0\in\cran(\Id-T)\smallsetminus\ran(\Id-T)$, 
$\|T^nx\|\to\infty$ and $\tfrac{1}{n}T^nx\to 0$ for every $x\in
X$. 
\item 
\label{f:Pazy3}
$0\notin\cran(\Id-T)$, and $\lim_{n\to\infty}
\tfrac{1}{n}\|T^nx\|>0$ for every $x\in X$. 
\end{enumerate}
\end{fact}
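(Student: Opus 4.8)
The plan is to establish the Cesàro limit \eqref{e:Pazy} first, since all three parts of the trichotomy are then read off from the position of $0$ relative to $C:=\cran(\Id-T)$, which is convex (this convexity is precisely what makes $v=P_C(0)$ well defined). Writing $s_n:=x-T^nx=\sum_{k=0}^{n-1}(T^kx-T^{k+1}x)$ exhibits $\tfrac1n s_n$ as an average of displacement vectors, so $\tfrac1n s_n\in\conv\ran(\Id-T)\subseteq C$. From nonexpansiveness one gets the subadditivity $\norm{s_{n+m}}\le\norm{s_n}+\norm{s_m}$, whence Fekete's lemma yields $\tfrac1n\norm{s_n}\to L:=\inf_m\tfrac1m\norm{s_m}$, and a two--point estimate shows $L$ is independent of $x$. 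Since $\tfrac1n s_n\in C$ we have $L\ge\operatorname{dist}(0,C)=\norm{v}$; conversely, choosing $w$ with $\norm{w-Tw}$ within $\varepsilon$ of the minimal displacement $\inf_{u\in\ran(\Id-T)}\norm{u}=\norm{v}$ and using $\norm{w-T^nw}\le n\norm{w-Tw}$ gives $L\le\norm{v}$. Thus $\tfrac1n\norm{s_n}\to\norm{v}=\min_{c\in C}\norm{c}$, and I would finish with the elementary Hilbert-space fact (via the parallelogram law) that a sequence in a closed convex set whose norms tend to the minimal norm converges to the minimal-norm point; applied to $\tfrac1n s_n$ this gives $\tfrac1n s_n\to v$, i.e.\ $\tfrac1n T^nx\to-v$.

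The three alternatives are exactly the trichotomy for the location of $0$ with respect to $C$, so they are mutually exclusive and exhaustive and it only remains to prove the stated conclusions. In case \ref{f:Pazy1} a point $z$ with $z-Tz=0$ is a fixed point, so $\norm{T^nx-z}=\norm{T^nx-T^nz}\le\norm{x-z}$ and the orbit is bounded (here $v=0$). In case \ref{f:Pazy3} we have $0\notin C$, hence $v\ne0$, and taking norms in \eqref{e:Pazy} gives $\tfrac1n\norm{T^nx}\to\norm{v}>0$. In case \ref{f:Pazy2} we have $v=0$, so \eqref{e:Pazy} already yields $\tfrac1n T^nx\to0$; the real content is $\norm{T^nx}\to\infty$.

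For case \ref{f:Pazy2} I would first prove a lemma: in finite dimensions a bounded orbit forces a fixed point. If $(T^nx)$ is bounded, then $p\mapsto\limsup_n\norm{T^nx-p}$ is continuous, coercive and, by the parallelogram law, has a unique minimizer $c$; nonexpansiveness gives $\limsup_n\norm{T^nx-Tc}\le\limsup_n\norm{T^nx-c}$, so $Tc$ is also a minimizer and $Tc=c$. Now suppose, for contradiction, that $\liminf_n\norm{T^nx}<\infty$, and let $T^{n_j}x\to p_0$ along a bounded subsequence. Since each $m\mapsto\norm{T^mx-T^{m+p}x}$ is nonincreasing, passing to the limit along $n_j$ shows that $\norm{T^ip_0-T^jp_0}$ depends only on $|i-j|$. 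Hence $T$ acts as an isometry on the orbit $\{T^kp_0\}$ and extends to an affine isometry $g=Q(\cdot)+b$ of $X$ with orthogonal $Q$ and $g^kp_0=T^kp_0$. The mean ergodic theorem for $Q$ shows that $\tfrac1k g^kp_0$ converges to the $\ker(\Id-Q)$-component of $b$; but \eqref{e:Pazy} applied to the starting point $p_0$ forces this limit to be $-v=0$, so $b\in\ran(\Id-Q)$ and $g$ has a fixed point. Then $g^kp_0=T^kp_0$ is bounded, and the lemma produces a fixed point of $T$, contradicting $0\notin\ran(\Id-T)$. Therefore $\norm{T^nx}\to\infty$.

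I expect the genuine obstacle to be exactly case \ref{f:Pazy2}: the limit \eqref{e:Pazy} only certifies sublinear growth, and one must exclude a bounded subsequence of the orbit. This is where finite-dimensionality is indispensable, both through the unique asymptotic center and, more delicately, through the extension of the partial isometry on the $\omega$-limit orbit to a global affine isometry whose orthogonal part can be decomposed. Verifying that the limiting pairwise distances $\norm{T^ip_0-T^jp_0}$ depend only on the index gap, and then controlling the extension, is the step I anticipate will require the most care.
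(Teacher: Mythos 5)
The paper gives no proof of this statement: it is imported verbatim as a Fact, with everything deferred to Pazy's 1971 paper (and the convexity of $\cran(\Id-T)$, which is what makes $v$ in \eqref{e:v} well defined, is likewise only cited). So there is no in-paper argument to compare yours against; what I can do is check your proof, and it holds up. Your derivation of \eqref{e:Pazy} --- telescoping $s_n=x-T^nx$, subadditivity of $\|s_n\|$ plus Fekete, independence of the limit $L$ from the starting point, the squeeze $L=\|v\|$ via near-minimal-displacement points, and the parallelogram-law identification of $\tfrac1n s_n$ with the minimal-norm point of $\cran(\Id-T)$ --- is the classical minimal-displacement argument and is essentially Pazy's own route; note that it does lean on the cited convexity of $\cran(\Id-T)$ (to get $\conv\ran(\Id-T)\subseteq\cran(\Id-T)$ and to have $L=\operatorname{dist}(0,C)$, without which the parallelogram step fails), but this is exactly the hypothesis the paper itself presupposes in \eqref{e:v}, so it is a legitimate starting point. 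Cases \ref{f:Pazy1} and \ref{f:Pazy3} are immediate, as you say. Your treatment of case \ref{f:Pazy2} is the genuinely substantive part, and it is correct: the asymptotic-center lemma (bounded orbit implies fixed point), the observation that the limiting distances $\|T^ip_0-T^jp_0\|$ depend only on $|i-j|$ because $m\mapsto\|T^mx-T^{m+p}x\|$ is nonincreasing, the extension of the resulting distance-preserving self-map of the orbit to a global affine isometry $g=Q(\cdot)+b$ with $Q$ orthogonal, and the ergodic-theorem argument forcing $b\in\ran(\Id-Q)=\big(\ker(\Id-Q)\big)^\perp$ all check out. Be aware, though, that this part is intrinsically finite-dimensional in three places: norm-compactness to produce $p_0$, the isometry-extension theorem, and the closedness of $\ran(\Id-Q)$. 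That is fine for this paper, whose standing hypothesis is $\dim X<\infty$, but Pazy's theorem (as the paper's footnote emphasizes) holds in every Hilbert space, where your compactness step breaks down and one needs different tools (e.g., demiclosedness of $\Id-T$); so your proof is a correct finite-dimensional proof rather than a proof of the cited result in its natural generality.
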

Now consider the case when $T$ does not have a fixed point. 
Let $x\in X$. 
In view of \cref{f:Pazy}, $\|T^nx\|\to \infty$ and
it is natural to ask whether additional asymptotic information is available
about the (eventually well defined) sequence
\begin{empheq}[box=\mybluebox]{equation}
\big(Q_n(x)\big)_\nnn := \left(\frac{T^nx}{\|T^nx\|}\right)_\nnn.
\end{empheq} 
Since $X$ is finite-dimensional, for every $x\in X$, $(Q_n(x))_\nnn$ has 
cluster points. 
If the sequence $(Q_n(x))_\nnn$ actually
converges, then we refer to this also as \emph{cosmic
convergence}\footnote{It will become clear in \cref{ss:cosmic} why we
speak of cosmic convergence.}. 
Combining \eqref{e:v} and \eqref{e:Pazy}, we obtain the following
necessary condition for cosmic convergence:
\begin{equation}
0\notin\cran(\Id-T)\quad\Rightarrow\quad v\neq 0
\;\text{and}\;
(\forall x\in X)\; Q_n(x) \to -v/\|v\|.
\end{equation}

\emph{The aim of this note is to provide conditions
sufficient for convergence of $(Q_n(x))_\nnn$ in the 
case when $0\in\cran(\Id-T)\smallsetminus\ran(\Id-T)$}. 

To the best of our knowledge, nothing was previously known about the
behaviour of $(Q_n(x))_\nnn$ in this case\footnote{Let us mention
in passing that the study of $(Q_n(x))_\nnn$ in the case when
$\Fix T\neq\varnothing$ seems of little interest. Indeed, if
$T\colon x\mapsto 0$, then $(Q_n(x))_\nnn$ is never well defined.}. 
The results in this note nurture the conjecture that 
the sequence $(Q_n(x))_\nnn$ actually converges.
Notation and notions not explicitly defined may 
be found in \cite{BC2011},
\cite{Rock70},  or \cite{Rock98}.

\section{Results}

\subsection{The one-dimensional case}

\begin{theorem}
\label{t:1D}
Suppose that $X$ is one-dimensional and that $\Fix
T=\varnothing$. Then $T$ admits cosmic convergence; in fact, 
exactly one of the following holds:
\begin{enumerate}
\item 
$(\forall x\in X)$ $Tx>x$, $T^nx\to\pinf$, and $Q_n(x)\to +1$.
\item
$(\forall x\in X)$ $Tx<x$, $T^nx\to\minf$, and $Q_n(x)\to -1$.
\end{enumerate}
\end{theorem}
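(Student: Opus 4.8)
The plan is to reduce everything to a sign analysis of the displacement map $g := T-\Id$. Since $X$ is one-dimensional we may identify it with $\RR$; then $T\colon\RR\to\RR$ is nonexpansive, hence $1$-Lipschitz and in particular continuous, so $g$ is continuous. Because $\Fix T=\varnothing$, we have $g(x)=Tx-x\neq 0$ for every $x$. The intermediate value theorem then forces $g$ to keep a constant sign on $\RR$: either $Tx>x$ for all $x$, or $Tx<x$ for all $x$. These two alternatives are mutually exclusive and yield exactly the two cases (i) and (ii); it remains to extract the asymptotics in each.

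Consider the case $Tx>x$ for all $x$ (the case $Tx<x$ being entirely symmetric). Fix $x\in X$ and apply the inequality $g>0$ at the point $T^nx$: this gives $T^{n+1}x=T(T^nx)>T^nx$, so $(T^nx)_\nnn$ is strictly increasing. I claim it is unbounded above. If it were bounded, then, being monotone, it would converge to some finite limit $L$; continuity of $T$ would give $T^{n+1}x=T(T^nx)\to TL$, while at the same time $T^{n+1}x\to L$, whence $TL=L$, contradicting $\Fix T=\varnothing$. Therefore $T^nx\to\pinf$.

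Once $T^nx\to\pinf$, the iterates are eventually positive, so $Q_n(x)=T^nx/\abs{T^nx}=+1$ for all large $n$, and in particular $Q_n(x)\to+1$. Symmetrically, in the case $Tx<x$ one obtains $T^nx\to\minf$ and $Q_n(x)\to -1$. This establishes cosmic convergence together with the stated dichotomy.

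I do not expect a serious obstacle here: the entire argument rests on the constant-sign property of $g$, and the only point deserving slight care is that this property is \emph{global}, so that the monotonicity, the divergence to $\pm\infty$, and the limiting value of $Q_n$ are the same for every starting point $x$ — which is precisely what the quantifier $(\forall x\in X)$ in the statement requires. It is also worth noting that this direct reasoning bypasses \cref{f:Pazy} entirely, although the conclusion $T^nx\to\pm\infty$ is of course consistent with the fixed-point-free regime to which cases \cref{f:Pazy2} and \cref{f:Pazy3} pertain.
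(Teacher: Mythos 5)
Your proof is correct, and its skeleton is the same as the paper's: identify $X$ with $\RR$, apply the Intermediate Value Theorem to the displacement $T-\Id$ (continuous since $T$ is nonexpansive) to conclude it has a global constant sign, and then observe that the orbit $(T^nx)_\nnn$ is strictly monotone. The one step where you genuinely depart is how divergence is obtained. The paper, having Pazy's trichotomy already on hand, simply invokes \cref{f:Pazy}\ref{f:Pazy2}\&\ref{f:Pazy3} to get $|T^nx|\to\pinf$, which combined with monotonicity yields $T^nx\to\pinf$ (resp.\ $\minf$). You instead rule out boundedness by the classical elementary argument: a bounded monotone orbit would converge to some finite $L$, and continuity of $T$ would then force $TL=L$, contradicting $\Fix T=\varnothing$. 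Your route buys self-containedness — it uses nothing beyond first-year real analysis and, as you note, bypasses \cref{f:Pazy} entirely, so it would survive even in a context where Pazy's theorem had not been established; the paper's route buys brevity, since the fact is already stated and the citation replaces the monotone-convergence argument. Both arguments are complete, and your closing remark about the global (for all $x$) nature of the sign dichotomy correctly addresses the quantifier structure of the statement.
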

\begin{proof}
We can and do assume that $X=\RR$. 
If there existed $a$ and $b$ in $\RR$ such that
$Ta>a$ and $Tb<b$, 
then the Intermediate Value Theorem would provide a
point $z$ between $a$ and $b$ such that $Tz=z$, which is absurd in
view of the hypothesis. 
It follows that
either $\ran(\Id-T)\subseteq \RMM$ or $\ran(\Id-T)\subseteq
\RPP$. 
Let us first assume that $\ran(\Id-T)\subseteq \RMM$, i.e., 
$(\forall x\in \RR)$ $x<Tx$. 
Let $x\in\RR$. 
On the one hand, we have
$x<Tx<T(Tx) = T^2x < T^3x<\cdots < T^nx < T^{n+1}x < \cdots$.
On the other hand, 
by \cref{f:Pazy}\ref{f:Pazy2}\&\ref{f:Pazy3}, 
$|T^nx|\to\pinf$. 
Altogether, $T^nx\to\pinf$ and hence $Q_n(x)\to +1$. 
Finally, the case when $\ran(\Id-T)\subseteq \RPP$ is treated
similarly. 
\end{proof}

\subsection{Composition of two projectors}

In this section, we assume that 
\begin{empheq}[box=\mybluebox]{equation}
\text{$A$ and $B$ are nonempty closed convex subsets of $X$}
\end{empheq} 
with corresponding projectors (nearest point mappings)
$P_A$ and $P_B$, respectively, 
and that 
\begin{empheq}[box=\mybluebox]{equation}
\text{$T = P_BP_A$.}
\end{empheq} 
We begin with a few technical lemmas. 

\begin{lemma}
\label{l:cutecone}
Let $K$ be a nonempty closed convex cone.
Then\footnote{We write $S^\ominus := \menge{x\in
X}{\sup\scal{x}{S}\leq 0}$ and $S^\oplus := -S^\ominus$ for a
subset $S$ of $X$.} $(K^\ominus)^\perp = K \cap (-K)$. 
\end{lemma}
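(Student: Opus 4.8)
The plan is to reduce the identity to the bipolar theorem for cones. The starting point is an elementary observation valid for \emph{any} subset $C$ of $X$: since $x$ lies in the orthogonal complement $C^\perp$ exactly when $\scal{x}{y}=0$ for every $y\in C$, and an equality of inner products splits into the two inequalities $\scal{x}{y}\le 0$ and $\scal{x}{y}\ge 0$, one has $C^\perp = C^\ominus\cap(-C^\ominus)$, where the second factor comes from rewriting $\scal{x}{y}\ge 0$ as $\scal{-x}{y}\le 0$. First I would record this splitting and then apply it to the particular subset $C:=K^\ominus$, which is automatically a closed convex cone, obtaining $(K^\ominus)^\perp = (K^\ominus)^\ominus\cap\big(-(K^\ominus)^\ominus\big)$.

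Next I would invoke the bipolar theorem: because $K$ is a nonempty closed convex cone, $(K^\ominus)^\ominus = K$ (see, e.g., \cite{BC2011}). Substituting this into the previous relation collapses both factors to $K$ and $-K$, respectively, and yields $(K^\ominus)^\perp = K\cap(-K)$, which is the claim. Alternatively, to keep the note self-contained, I would argue by double inclusion. For the inclusion $K\cap(-K)\subseteq(K^\ominus)^\perp$: given $x\in K\cap(-K)$ and any $y\in K^\ominus$, the membership $x\in K$ forces $\scal{y}{x}\le 0$, while $-x\in K$ forces $\scal{y}{-x}\le 0$, so $\scal{y}{x}=0$. For the reverse inclusion: given $x\in(K^\ominus)^\perp$, every $y\in K^\ominus$ satisfies $\scal{x}{y}=0$, hence both $\scal{x}{y}\le 0$ and $\scal{-x}{y}\le 0$, so $x$ and $-x$ both lie in $(K^\ominus)^\ominus$, and one concludes $x\in K\cap(-K)$ using the bipolar theorem once more.

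Either route makes clear that the only genuinely nontrivial ingredient is the bipolar identity $(K^\ominus)^\ominus=K$, which is precisely where the hypotheses that $K$ be closed, convex, and a cone are used, and where nonemptiness guarantees $0\in K$. I expect this to be the single point that calls for a citation rather than a direct computation; the remainder is routine bookkeeping, namely splitting an equality of inner products into two inequalities and tracking the sign conventions in the definition of $S^\ominus$. I do not anticipate any analytic difficulty beyond keeping those signs straight.
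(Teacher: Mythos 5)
Your proof is correct and follows essentially the same route as the paper: the key ingredient in both is the bipolar identity $(K^\ominus)^\ominus=K$ from \cite{BC2011}, combined with the elementary splitting of the equality $\scal{x}{y}=0$ into the two inequalities $\scal{x}{y}\le 0$ and $\scal{-x}{y}\le 0$. Your double-inclusion alternative is in fact verbatim the paper's argument, and your first route is merely a repackaging of it via the identity $C^\perp=C^\ominus\cap(-C^\ominus)$ applied to $C=K^\ominus$.
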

\begin{proof}
We will use repeatedly the fact that (see \cite[Corollary~6.33]{BC2011}) 
$(K^\ominus)^\ominus = K$. 
``$\subseteq$'':
Indeed, $(K^\ominus)^\perp
\subseteq (K^\ominus)^\ominus = K$ and
$(K^\ominus)^\perp \subseteq (K^\ominus)^\oplus = -K$;
hence, $(K^\ominus)^\perp \subseteq K\cap (-K)$. 
``$\supseteq$'': Let $x\in K\cap (-K)$.
Then $\scal{x}{K^\ominus}\leq 0$ and $\scal{-x}{K^\ominus}\leq 0$
and thus $\scal{x}{K^\ominus}=0$, i.e., $x\in (K^\ominus)^\perp$. 
\end{proof}

\begin{lemma}
\label{l:0428}
The set of (oriented) functionals separating the sets
$A$ and $B$ satisfies
\begin{equation}
\label{e:0428a}
\mathcal{U} := 
\menge{u\in X\smallsetminus \{0\}}{\sup\scal{A}{u}\leq\inf\scal{B}{u}}
= \big(\ccone(A-B)\big)^\ominus\smallsetminus\{0\}. 
\end{equation}
Moreover\footnote{We use $\rec S := \menge{x\in X}{x+S\subseteq
S}$ to denote the recession cone of a nonempty convex
subset of $X$.}, 
\begin{equation}
\label{e:0428b}
(\rec A)\cap (\rec B)\subseteq \bigcap_{u\in\mathcal{U}}
\{u\}^\perp = \ccone(A-B)\cap\ccone(B-A).
\end{equation}
Consequently, if $A\cap B=\varnothing$, then
$\mathcal{U}\neq\varnothing$ and $(\rec A)\cap (\rec B)$ is
a nonempty closed convex cone that is
contained in a proper hyperplane of $X$. 
\end{lemma}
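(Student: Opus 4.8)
The plan is to reduce the whole statement to polar-cone bookkeeping built on \cref{l:cutecone}. The starting observation is that a nonzero $u$ belongs to $\mathcal{U}$ precisely when $\scal{a}{u}\le\scal{b}{u}$ for all $a\in A$ and $b\in B$, i.e. when $\sup\scal{u}{A-B}\le 0$, which is exactly the statement $u\in(A-B)^\ominus$. Since the polar cone of any set depends only on its closed conical hull, one has $(A-B)^\ominus=\big(\ccone(A-B)\big)^\ominus$, and \eqref{e:0428a} drops out after excising $0$.

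For the equality in \eqref{e:0428b} I would set $K:=\ccone(A-B)$, so that $\ccone(B-A)=-K$ and, by \eqref{e:0428a}, $\mathcal{U}=K^\ominus\smallsetminus\{0\}$. Then $\bigcap_{u\in\mathcal{U}}\{u\}^\perp=\bigcap_{u\in K^\ominus}\{u\}^\perp=(K^\ominus)^\perp$, where deleting the index $u=0$ is harmless because $\{0\}^\perp=X$. Now \cref{l:cutecone} applied to the closed convex cone $K$ gives $(K^\ominus)^\perp=K\cap(-K)=\ccone(A-B)\cap\ccone(B-A)$, as wanted.

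The inclusion $(\rec A)\cap(\rec B)\subseteq\bigcap_{u\in\mathcal{U}}\{u\}^\perp$ is the only place where the recession structure is really used. Here I would fix $w\in(\rec A)\cap(\rec B)$, a functional $u\in\mathcal{U}$, and base points $a_0\in A$, $b_0\in B$. Because $a_0+tw\in A$ for every $t\ge 0$, the separation inequality $\scal{a_0+tw}{u}\le\scal{b_0}{u}$ forces $\scal{w}{u}\le 0$ as $t\to+\infty$; symmetrically, using $b_0+tw\in B$ in $\scal{a_0}{u}\le\scal{b_0+tw}{u}$ forces $\scal{w}{u}\ge 0$. Hence $\scal{w}{u}=0$, i.e. $w\in\{u\}^\perp$.

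Finally, assume $A\cap B=\varnothing$. Then $A-B$ is a nonempty convex set with $0\notin A-B$, hence $0\notin\ri(A-B)$, and the finite-dimensional separation theorem produces a nonzero $u$ with $\sup\scal{u}{A-B}\le 0$, so $\mathcal{U}\neq\varnothing$. Picking $u_0\in\mathcal{U}$, the set $(\rec A)\cap(\rec B)$ contains $0$ and is therefore nonempty, it is a closed convex cone as the intersection of the closed convex cones $\rec A$ and $\rec B$, and by \eqref{e:0428b} it is contained in $\{u_0\}^\perp$, which is a proper hyperplane since $u_0\neq 0$. The step that most needs care is this separation: for merely disjoint closed convex sets only weak (not strict) separation is guaranteed, but weak separation is precisely what the definition of $\mathcal{U}$ requires, so it is exactly enough.
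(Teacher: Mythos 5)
Your proof is correct and follows essentially the same route as the paper: you reduce the equality in \eqref{e:0428b} to \cref{l:cutecone}, obtain the inclusion from the fact that recession directions must annihilate every separating functional (the paper phrases this via the set identity $A+\rec A=A$ rather than your pointwise ray argument $a_0+tw$, a purely cosmetic difference), and get the ``Consequently'' part from the finite-dimensional separation theorem, exactly as the paper does. Your additional details---spelling out \eqref{e:0428a}, which the paper dismisses as easily checked, and noting that weak separation suffices---are accurate but do not change the approach.
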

\begin{proof}
Since \eqref{e:0428a} is easily checked, we turn to
\eqref{e:0428b}: Let us first deal with the inclusion. 
If $\mathcal{U}=\varnothing$, then the intersection is trivially
equal to $X$ and we are done. 
So suppose that  $u\in\mathcal{U}$,
set $R :=\rec A$ and $S:=\rec B$.
Then $A+R=A$ and $B+S=B$; consequently,
$\sup\scal{A+R}{u}\leq\inf\scal{B+S}{u}$.
Since $R$ and $S$ are cones, we deduce that
$R\subseteq\{u\}^\ominus$ and $S\subseteq \{u\}^\oplus$. 
Therefore, $R\cap S\subseteq \{u\}^\perp$. 
This completes the proof of the inclusion.
Now $x\in \bigcap_{u\in\mathcal{U}}\{u\}^\perp$
$\Leftrightarrow$ 
$(\forall u\in \mathcal{U})$ $\scal{x}{u}=0$
$\Leftrightarrow$ 
$x \in \mathcal{U}^\perp = (\ccone(A-B))^{\ominus\perp} =
\ccone(A-B)\cap(-\ccone(A-B)) = \ccone(A-B)\cap\ccone(B-A)$ by
\cref{l:cutecone}. 
The ``Consequently'' part follows from the Separation Theorem 
(see, e.g., \cite[Theorem~2.5]{MN}).
\end{proof}

\begin{lemma}
\label{l:0427}
$0\in\cran(\Id-T)\smallsetminus\ran(\Id-T)$
$\Leftrightarrow$
$\Fix T=\varnothing$. 
\end{lemma}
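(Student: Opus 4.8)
The plan is to split the claimed equivalence into its two implications and to notice that one of them is essentially free, so that all the work is concentrated in a single analytic estimate. First I would record the elementary observation that, for any nonexpansive $T$, one has $0\in\ran(\Id-T)$ if and only if there exists $x$ with $x-Tx=0$, i.e.\ if and only if $\Fix T\neq\varnothing$. Taking contrapositives, $\Fix T=\varnothing$ is equivalent to $0\notin\ran(\Id-T)$. Hence the forward implication ``$\Rightarrow$'' is immediate: if $0\in\cran(\Id-T)\smallsetminus\ran(\Id-T)$ then in particular $0\notin\ran(\Id-T)$, whence $\Fix T=\varnothing$. For the reverse implication it then remains to prove the one genuine fact, namely that for a composition of two projectors one always has $0\in\cran(\Id-T)$; equivalently, that the minimal displacement $\inf_{x\in X}\norm{x-Tx}$ vanishes. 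Granting this, $\Fix T=\varnothing$ gives $0\notin\ran(\Id-T)$ while $0\in\cran(\Id-T)$, so that $0\in\cran(\Id-T)\smallsetminus\ran(\Id-T)$, as required. In Pazy's language this amounts to saying that, for $T=P_BP_A$, the third alternative of \cref{f:Pazy} never occurs.

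To establish $0\in\cran(\Id-T)$ I would exhibit an explicit minimizing sequence for the displacement built from the gap between $A$ and $B$. Put $d:=\inf_{a\in A,\,b\in B}\norm{a-b}$, which is finite since $A$ and $B$ are nonempty. Choose $b_n\in B$ with $\inf_{a\in A}\norm{a-b_n}\to d$, set $a_n:=P_Ab_n$ (so that $\norm{a_n-b_n}\to d$), and set $b_n':=P_Ba_n=Tb_n$. Since $b_n'$ is the nearest point of $B$ to $a_n$ and $b_n\in B$, we have $d\le\norm{a_n-b_n'}\le\norm{a_n-b_n}\to d$, so $\norm{a_n-b_n'}\to d$ as well. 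The projection characterization of $b_n'=P_Ba_n$ applied to the competitor $b_n\in B$ gives $\scal{a_n-b_n'}{b_n-b_n'}\le 0$, and expanding $\norm{a_n-b_n}^2=\norm{(a_n-b_n')-(b_n-b_n')}^2$ then yields the Pythagorean-type estimate
\begin{equation*}
\norm{b_n-Tb_n}^2=\norm{b_n-b_n'}^2\le\norm{a_n-b_n}^2-\norm{a_n-b_n'}^2\longrightarrow d^2-d^2=0 .
\end{equation*}
Thus $(b_n-Tb_n)_\nnn$ is a sequence in $\ran(\Id-T)$ converging to $0$, so $0\in\cran(\Id-T)$, completing the reverse implication.

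The crux of the argument — and the only place beyond bookkeeping — is precisely this estimate: the obtuse-angle inequality coming from the projection onto $B$ is exactly what forces the round-trip displacement $\norm{b_n-b_n'}$ to collapse once the two legs $\norm{a_n-b_n}$ and $\norm{a_n-b_n'}$ both approach the common value $d$. I expect the main subtlety to be the disciplined choice of the minimizing sequence, replacing an arbitrary near-optimal pair by $a_n=P_Ab_n$ and $b_n'=P_Ba_n$ so that both legs genuinely tend to $d$ and so that $b_n-Tb_n$ is literally the displacement we wish to drive to zero; once that is set up, everything else is routine and, incidentally, valid in any Hilbert space.
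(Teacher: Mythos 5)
Your proof is correct, and it shares the paper's overall skeleton---both arguments reduce the lemma to the trivial equivalence $0\in\ran(\Id-T)\Leftrightarrow\Fix T\neq\varnothing$ plus the single substantive fact that $0\in\cran(\Id-T)$ holds unconditionally for $T=P_BP_A$---but you establish that fact by a genuinely different, self-contained route. The paper's entire proof is a citation: it invokes \cite{B03}, where compositions of projectors are shown to be asymptotically regular ($T^nx-T^{n+1}x\to 0$ for every $x$), a strictly stronger statement than the vanishing of the infimal displacement, which is all the lemma needs. You instead construct an explicit minimizing sequence: with $d:=\inf_{a\in A,\,b\in B}\norm{a-b}$, you pick $b_n\in B$ with $\operatorname{dist}(b_n,A)\to d$, set $a_n:=P_Ab_n$ and $b_n':=P_Ba_n=Tb_n$; then $d\le\norm{a_n-b_n'}\le\norm{a_n-b_n}$ forces both legs to tend to $d$, and the obtuse-angle inequality $\scal{a_n-b_n'}{b_n-b_n'}\le 0$ for the projection onto $B$ gives
\begin{equation*}
\norm{b_n-Tb_n}^2\le\norm{a_n-b_n}^2-\norm{a_n-b_n'}^2\longrightarrow d^2-d^2=0,
\end{equation*}
so $0\in\cran(\Id-T)$. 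Every step checks out, and the argument is valid in arbitrary Hilbert space, matching the generality of the cited result. What your route buys is elementariness and self-containedness: only the variational characterization of projections is used, and the reader need not import asymptotic regularity. What the paper's route buys is brevity and a stronger fact (displacements vanish along every orbit, not merely along some sequence), which is in the same spirit as the orbit-based results it uses later, e.g.\ \cite{B94} in \cref{t:0427}. Either way the lemma stands.
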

\begin{proof}
By \cite{B03} (see also \cite{Victoria} for extensions to firmly
nonexpansive operators), we always have 
$0\in\cran(\Id-T)$, and this implies the result.
\end{proof}

We are now ready for the main result of this section.

\begin{theorem}
\label{t:0427}
Suppose that $\Fix T=\varnothing$. 
Let $b_0 := x \in X$ and set
$(\forall\nnn)$ 
$a_{n+1} := P_Ab_n$ and
$b_{n+1} := P_Ba_{n+1}=Tb_n$.
Then the following hold:
\begin{enumerate}
\item 
\label{t:0427i}
$\|a_n\|\to\pinf$,
$\|b_n\|\to\pinf$,
$b_n-a_n\to g$, and 
$a_{n+1}-b_n\to -g$,
where $g := P_{\overline{B-A}}(0)$. 
\item 
\label{t:0427ii}
All cluster points of $(b_n/\|b_n\|)_\nnn$ lie in
the set 
\begin{equation}
\big((\rec A)\cap(\rec B)\big) \cap
\big((\rec A)\cap(\rec B)\big)^\oplus,
\end{equation}
which is a closed convex cone in $X$ that properly contains
$\{0\}$. 
\item 
\label{t:0427iii}
Neither $(\rec A)\cap(\rec B)$ 
nor $((\rec A)\cap(\rec B))^\oplus$ 
is a linear subspace of $X$. 
\item \emph{\textbf{(cosmic convergence)}}
\label{t:0427iv}
The sequence $(Q_n(x))_\nnn = (b_n/\|b_n\|)_\nnn$ converges provided one of the
following holds:
\begin{enumerate}
\item
\label{t:0427iva}
$((\rec A)\cap(\rec B)) \cap
((\rec A)\cap(\rec B))^\oplus$ is a ray.
\item
\label{t:0427ivb}
$(\rec A)\cap(\rec B)$ is a ray. 
\item
\label{t:0427ivc}
$\dim X = 2$. 
\end{enumerate}
\end{enumerate}
\end{theorem}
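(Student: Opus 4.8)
The plan is to establish part~\ref{t:0427i} first and then bootstrap the remaining parts from it. I would begin by observing that $\Fix T=\varnothing$ forces $A\cap B=\varnothing$ (a common point would be fixed by $P_BP_A$), so \cref{l:0428} applies; moreover, by \cref{l:0427} we are in case~\ref{f:Pazy2} of \cref{f:Pazy}, whence $\norm{b_n}=\norm{T^nx}\to\pinf$. For the convergence of the differences I would invoke the classical analysis of the von Neumann alternating-projection sequence for two (possibly non-intersecting) closed convex sets: firm nonexpansiveness of $P_A$ and $P_B$ makes the consecutive gaps $\norm{b_n-a_{n+1}}\ge\norm{a_{n+1}-b_{n+1}}\ge\norm{b_{n+1}-a_{n+2}}$ nonincreasing (each gap is the distance of the previous iterate to the set just projected onto), their common limit is identified with $\norm{g}$, where $g=P_{\overline{B-A}}(0)$ is the minimal-norm element of $\overline{B-A}$, and one upgrades this norm convergence to $b_n-a_n\to g$ and $a_{n+1}-b_n\to-g$; in particular $\norm{a_n}\to\pinf$. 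This identification of the limit with the gap vector is the technical heart of the argument and the step I expect to be the main obstacle.

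For part~\ref{t:0427ii}, let $w$ be a cluster point of $(b_n/\norm{b_n})_\nnn$, say $b_{n_k}/\norm{b_{n_k}}\to w$ with $\norm{w}=1$. Since $b_n\in B$ and $\norm{b_n}\to\pinf$, the standard characterization of recession directions (for $b\in B$ and $\lambda\geq0$, the convex combination $(1-\lambda/\norm{b_{n_k}})b+(\lambda/\norm{b_{n_k}})b_{n_k}$ lies in $B$ for large $k$ and tends to $b+\lambda w$) gives $w\in\rec B$; because $b_n-a_n\to g$ is bounded, $(a_{n_k}/\norm{a_{n_k}})$ also converges to $w$, so likewise $w\in\rec A$, and hence $w\in C:=(\rec A)\cap(\rec B)$. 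To place $w$ in $C^\oplus$, fix $c\in C$: since $a_{n+1}+c\in A$ and $a_{n+1}=P_Ab_n$ we get $\scal{b_n-a_{n+1}}{c}\leq0$, and since $b_{n+1}+c\in B$ and $b_{n+1}=P_Ba_{n+1}$ we get $\scal{a_{n+1}-b_{n+1}}{c}\leq0$; adding shows $\scal{b_n}{c}\leq\scal{b_{n+1}}{c}$, so $(\scal{b_n}{c})_\nnn$ is nondecreasing, hence bounded below. Were $\scal{w}{c}<0$ we would have $\scal{b_{n_k}}{c}=\norm{b_{n_k}}\scal{b_{n_k}/\norm{b_{n_k}}}{c}\to-\infty$, a contradiction; thus $\scal{w}{c}\geq0$ for every $c\in C$, i.e.\ $w\in C^\oplus$. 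Since $\norm{w}=1$, the closed convex cone $C\cap C^\oplus$ properly contains $\{0\}$.

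For part~\ref{t:0427iii}, I would first record the polarity fact that, for a closed convex cone $C$, the dual cone $C^\oplus$ is a linear subspace if and only if $C$ is one (apply the bipolar identity $C^{\oplus\oplus}=C$ together with the observation that a cone $K$ is a subspace exactly when $K=-K$). Hence it suffices to rule out that $C$ is a subspace. But if $C$ were a subspace, then $C^\oplus=C^\perp$, so $C\cap C^\oplus=C\cap C^\perp=\{0\}$, contradicting the nonzero cluster point produced in part~\ref{t:0427ii}. Therefore neither $C$ nor $C^\oplus$ is a subspace.

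Finally, part~\ref{t:0427iv} follows by reducing each hypothesis to uniqueness of the cluster point. Under~\ref{t:0427iva}, $C\cap C^\oplus$ meets the unit sphere in a single point, so by part~\ref{t:0427ii} the nonempty set of cluster points of the bounded sequence $(b_n/\norm{b_n})_\nnn$ is a singleton and the sequence converges. Under~\ref{t:0427ivb}, $C\cap C^\oplus$ is a nonzero subcone of the ray $C$ and therefore equals $C$, a ray, which reduces to~\ref{t:0427iva}. Under~\ref{t:0427ivc}, \cref{l:0428} confines the nonempty closed convex cone $C$ to a proper hyperplane of $X$, which in dimension two is a line through the origin; a closed convex cone contained in a line is $\{0\}$, a ray, or the whole line, and since part~\ref{t:0427iii} excludes the two subspace cases, $C$ is a ray, reducing to~\ref{t:0427ivb}.
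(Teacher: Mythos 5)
Your proposal is correct and follows the same overall decomposition as the paper; the one genuinely different step is localized in part \ref{t:0427ii}. To place a cluster point $w$ in $R^\oplus$, where $R:=(\rec A)\cap(\rec B)$, the paper proves the stronger structural fact that every increment $b_{n+1}-b_n$ lies in $R^\oplus$, by combining Zarantonello's range theorem \cite[Theorem~3.1]{Zara} (giving $\cran(P_B-\Id)=(\rec B)^\oplus$ and $\cran(P_A-\Id)=(\rec A)^\oplus$) with the polar identity $\overline{(\rec A)^\oplus+(\rec B)^\oplus}=R^\oplus$ of \cite[Proposition~6.34]{BC2011}, and then sums the increments, normalizes, and uses closedness of $R^\oplus$. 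You instead fix $c\in R$, apply the variational characterization of the projections at the translates $a_{n+1}+c\in A$ and $b_{n+1}+c\in B$ to obtain $\scal{b_n-a_{n+1}}{c}\le 0$ and $\scal{a_{n+1}-b_{n+1}}{c}\le 0$, conclude that $(\scal{b_n}{c})_\nnn$ is nondecreasing, and rule out $\scal{w}{c}<0$ using $\|b_n\|\to\pinf$; this is correct and more elementary and self-contained (no range-of-$P-\Id$ theorem, no polar-of-intersection formula), at the price of yielding only the limiting inequality rather than the containment of the increments themselves. Everything else matches the paper up to level of detail: your recession-cone membership argument in \ref{t:0427ii} is the standard horizon argument that the paper cites as \cite[Proposition~6.50]{BC2011}; part \ref{t:0427iii} is essentially the paper's polarity argument; and part \ref{t:0427iv} is the same chain of reductions \ref{t:0427ivc}$\Rightarrow$\ref{t:0427ivb}$\Rightarrow$\ref{t:0427iva}, where your explicit derivation of $A\cap B=\varnothing$ from $\Fix T=\varnothing$ and your appeal to \cref{l:0428} to trap $R$ in a line through the origin fill in what the paper compresses into ``for (c) use \ref{t:0427iii}''. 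Finally, the step you flag as the main obstacle in part \ref{t:0427i} --- identifying the limit of the displacement vectors with the gap vector $g=P_{\overline{B-A}}(0)$ --- is precisely what the paper outsources to \cite[Theorem~4.8]{B94}, so once that citation is supplied your treatment of \ref{t:0427i} stands on the same footing as the paper's.
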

\begin{proof}
Set $R := (\rec A)\cap(\rec B)$, which is a nonempty closed
convex cone. 

\ref{t:0427i}: See \cite[Theorem~4.8]{B94}.

\ref{t:0427ii}: 
Note that \ref{t:0427i} makes the quotient sequence eventually
well defined. 
Let $q$ be cluster point of $(b_n/\|b_n\|)_\nnn$, say
\begin{equation}
\frac{b_{k_n}}{\|b_{k_n}\|}\to q
\end{equation}
for some subsequence $(b_{k_n})_\nnn$ of $(b_n)_\nnn$. 
Then \cite[Proposition~6.50]{BC2011} implies that
$q\in\rec B$. 
Furthermore, since $a_{k_n}-b_{k_n}\to -g$ and
$\|b_{k_n}\|\to\pinf$, we deduce that 
\begin{equation}
\frac{a_{k_n}}{\|b_{k_n}\|} = \frac{a_{k_n}-b_{k_n}}{\|b_{k_n}\|}
+ \frac{b_{k_n}}{\|b_{k_n}\|} \to q. 
\end{equation}
As before, this implies that $q\in\rec A$. 
Thus
\begin{equation}
q\in (\rec A)\cap(\rec B)=R.
\end{equation}
On the other hand, 
using \cite[Theorem~3.1]{Zara} and
\cite[Proposition~6.34]{BC2011}, we have
\begin{subequations}
\begin{align}
b_{n+1}-b_n &= (b_{n+1}-a_{n+1})+(a_{n+1}-b_n)
\in \ran(P_B-\Id) + \ran(P_A-\Id)\\
& \subseteq \cran(P_B-\Id) + \cran(P_A-\Id)=(\rec B)^\oplus +
(\rec A)^\oplus\\
&\subseteq \overline{(\rec A)^\oplus +
(\rec B)^\oplus} 
= \big((\rec A)\cap(\rec B)\big)^\oplus = R^\oplus. 
\end{align}
\end{subequations}
It follows that
$b_n-b_0 = \sum_{k=0}^{n-1} (b_{k+1}-b_k) \in nR^\oplus =
R^\oplus$;
hence, $(b_n-b_0)/\|b_n-b_0\|\in R^\oplus$ which implies that
$q\in\ R^\oplus$. 
Altogether, $q\in R\cap R^\oplus$.
Since $\|q\|=1$, we deduce that $\{0\}\subsetneqq R\cap R^\oplus$. 
Finally, if $R$ was a linear subspace of $X$,
then $R\cap R^\oplus = R\cap R^\perp = \{0\}$, which is absurd. 
Hence $R$ is not a linear subspace.
If $R^\oplus$ were a linear subspace of $X$, then so would be 
$R^{\oplus\oplus}=R$, which is absurd. 

\ref{t:0427iv}:
In view of \ref{t:0427ii}, $R\cap R^\oplus$ contains a ray and it
suffices to show that $R\cap R^\oplus$ is precisely a ray. 
Indeed, each of the listed conditions guarantees that ---
for \ref{t:0427ivc} use \ref{t:0427iii}. 
\end{proof}

\begin{figure}[h!]
\begin{center}
\includegraphics[scale=0.5]{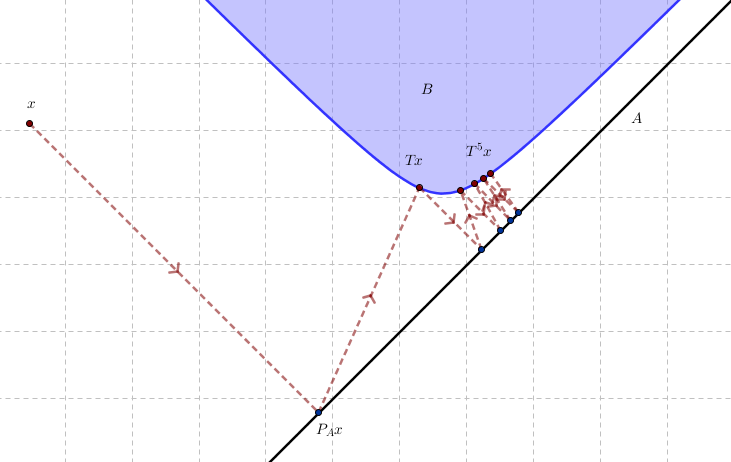}
\end{center}
\caption{A \texttt{GeoGebra} \cite{geogebra} snapshot 
in $\RR^2$ for two sets $A$ (the black line) and 
$B$ (the blue region) illustrating \cref{t:0427}\ref{t:0427ivc}.
Shown are the first few iterates 
of the sequence $(T^n x)_\nnn = (b_n)_\nnn$ (red points)
and of the sequence $(a_n)_\nnn$
(blue points).
We visually confirm cosmic convergence: 
the sequence $(Q_n(x))_\nnn$
converges to $(1/\sqrt{2})(1,1)$.
}
\label{figure:i}
\end{figure}

In \cref{figure:i}, we visualize 
\cref{t:0427}\ref{t:0427ivc} for the case 
when $A$ and $B$ are nonintersecting unbounded
closed convex subsets in the Euclidean plane.

\subsection{Firmly nonexpansive operators}

Recall that $x\in X$ belongs to the \emph{horizon cone} of a 
nonempty subset $C$ of $X$, written $x\in C^\infty$ if 
there exist sequences $(c_n)_\nnn$ in $C$ and
$(\alpha_n)_\nnn$ in $\RPP$ such that $\lambda_n\to 0$ and
$\lambda_n c_n\to x$.
Note that $C^\infty= \overline{C}^\infty$; furthermore,
if $C$ is closed and convex, then $C^\infty=\rec C$ 
(see \cite[Section~6.G]{Rock98}). 
The notion of the horizon cone allows us to present a superset of
cluster points of the iterates of $T$. 

\begin{theorem}
\label{t:T}
Suppose that $\Fix T = \varnothing$,
let $x_0 := x \in X$, and set $(\forall\nnn)$ $x_{n+1}:=Tx_n$. 
Then the following hold:
\begin{enumerate}
\item
\label{t:Ti}
All cluster points of $(x_n/\|x_n\|)_\nnn$ lie in the cone
\begin{equation}
R := \big(\ran T\big)^\infty \cap \big(\ran(T-\Id)\big)^\infty = 
\big(\ran T\big)^\infty \cap \rec\big(\cran(T-\Id)\big). 
\end{equation}
\item
\label{t:Tii}
If $T$ is firmly nonexpansive, then 
$R = \rec(\cran T)\cap\rec(\cran(T-\Id))$. 
\item \textbf{\emph{(cosmic convergence)}}
\label{t:Tiii}
If $R$ is a ray, then $(Q_n(x))_\nnn = (x_n/\|x_n\|)_\nnn$
converges. 
\end{enumerate}
\end{theorem}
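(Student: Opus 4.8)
The plan is to reduce all three parts to an analysis of the cluster points of the normalized orbit $(x_n/\|x_n\|)_\nnn$. Since $\Fix T=\varnothing$, \cref{f:Pazy} places us in case~\ref{f:Pazy2} or~\ref{f:Pazy3}, so $\|x_n\|=\|T^nx\|\to\pinf$ and the quotient sequence is eventually well defined; as $X$ is finite-dimensional, $(x_n/\|x_n\|)_\nnn$ has cluster points, all of norm $1$. Item~\ref{t:Ti} is the heart of the matter; \ref{t:Tii} will be a rewriting of \ref{t:Ti} exploiting firm nonexpansiveness, and \ref{t:Tiii} a short consequence of \ref{t:Ti}. Throughout I would set $C:=\cran(T-\Id)$ and use that $C$ is closed and convex (the same structural fact that makes $v$ in \eqref{e:v} well defined), so that $(\ran(T-\Id))^\infty=C^\infty=\rec C$ by the identities $D^\infty=\overline D{}^\infty$ and $D^\infty=\rec D$ for closed convex $D$ recalled before the statement; this disposes of the second equality defining $R$.

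For \ref{t:Ti}, fix a cluster point $q$, say $x_{k_n}/\|x_{k_n}\|\to q$. The membership $q\in(\ran T)^\infty$ is immediate and needs no convexity: since $x_{k_n}=Tx_{k_n-1}\in\ran T$ and $\lambda_n:=1/\|x_{k_n}\|\downarrow 0$ with $\lambda_n x_{k_n}\to q$, the very definition of the horizon cone yields $q\in(\ran T)^\infty$. For the second factor I would telescope:
\[
x_n-x_0=\sum_{k=0}^{n-1}(x_{k+1}-x_k)=\sum_{k=0}^{n-1}(T-\Id)x_k\in\cone\big(\ran(T-\Id)\big),
\]
since each summand lies in $\ran(T-\Id)$ and a finite sum of such vectors lies in the convex cone they generate. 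Dividing by $\|x_{k_n}\|>0$ keeps the vector in this cone, and passing to the limit gives $q\in\ccone(\ran(T-\Id))$.

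The step I expect to be the main obstacle is upgrading this last containment to the asserted $q\in\rec C$. What telescoping delivers for free is membership in the closed conical hull $\ccone(\ran(T-\Id))$, and in general one only has $\rec C\subseteq\ccone(\ran(T-\Id))$; the two coincide exactly when $C$ is itself a cone, which is precisely what occurs in the projector setting of \cref{t:0427} (there $\cran(P_A-\Id)=(\rec A)^\oplus$ is a closed convex cone). Closing this gap is where the real work lies. The natural route is the barrier-cone characterization $\rec C=\menge{d}{\scal{d}{w}\le 0\ \text{whenever}\ \sup\scal{C}{w}<\pinf}$ together with the telescoped estimate $\scal{x_n}{w}\le\scal{x_0}{w}+n\sup\scal{C}{w}$, which forces $\scal{q}{w}\le 0$ for every $w$ with $\sup\scal{C}{w}\le 0$; making this yield the inequality for all $w$ in the barrier cone of $C$ is the delicate point, and is exactly where the cone structure of $C$ (or a suitable replacement hypothesis) must enter.

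For \ref{t:Tii}, firm nonexpansiveness lets me write $T=J_A$ for a maximal monotone $A$, whence $\cran T=\overline{\dom A}$ and, via $\Id-T=J_{A^{-1}}$, $\cran(T-\Id)=-\overline{\ran A}$; both are closed and convex, so $(\ran T)^\infty=\rec(\cran T)$ by the same general identity, and substituting into \ref{t:Ti} produces the stated formula $R=\rec(\cran T)\cap\rec(\cran(T-\Id))$. Finally \ref{t:Tiii} is immediate: by \ref{t:Ti} every cluster point of $(Q_n(x))_\nnn$ lies in $R$ and has norm $1$, so if $R$ is a ray then $R$ meets the unit sphere in a single point; hence $(Q_n(x))_\nnn$ has exactly one cluster point, and a sequence in the compact unit sphere with a unique cluster point converges.
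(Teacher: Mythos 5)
You have, in effect, reconstructed the paper's proof and located its flaw. Everything you do prove is exactly what the paper proves, by the same means: well-definedness of the quotient sequence via \cref{f:Pazy}; the direct verification that $q\in(\ran T)^\infty$; the telescoping argument (the paper even writes $x_{n+1}-x_0\in(n+1)\ran(T-\Id)$, which silently assumes convexity of the range --- your insertion of $\conv$ is the correct bookkeeping); for \ref{t:Tii}, convexity of $\cran T$ (the paper applies \cite[Lemma~4]{Pazy} to the nonexpansive map $\Id-T$, you invoke $T=J_A$ and Minty-type facts; both work); and the compactness argument for \ref{t:Tiii}. The step you flag as the main obstacle --- upgrading $q\in\ccone\big(\ran(T-\Id)\big)$ to $q\in\big(\ran(T-\Id)\big)^\infty=\rec\big(\cran(T-\Id)\big)$ --- is precisely the step the paper does not justify either: from ``$(x_{n+1}-x_0)/\|x_{n+1}\|\in\cone\ran(T-\Id)$'' it concludes ``thus $q\in(\ran(T-\Id))^\infty$'', a non sequitur, since limits of points of a conical hull lie in the closed (convex) conical hull, which in general strictly contains the horizon cone; the two coincide when $\cran(T-\Id)$ is a cone, as in the projector setting of \cref{t:0427}, but not in general.

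Moreover, your instinct that ``a suitable replacement hypothesis must enter'' is right in the strongest sense: the gap cannot be closed, because item \ref{t:Ti} (and with it \ref{t:Tii}) is false as stated. Take $X=\RR$ and $T=\Id+g$, where $g(t)=1$ for $t\leq 0$ and $g(t)=1/(1+t)$ for $t\geq 0$. Since $g$ is positive, nonincreasing and $1$-Lipschitz, both $T$ and $2T-\Id$ are nonexpansive, so $T$ is firmly nonexpansive and $\Fix T=\varnothing$; also $\ran(\Id-T)=\left[-1,0\right[$, so $0\in\cran(\Id-T)\smallsetminus\ran(\Id-T)$, i.e.\ we are in case \ref{f:Pazy2} of \cref{f:Pazy}, the very case this note targets. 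The orbit from $x_0=0$ increases to $\pinf$ (were it bounded, it would converge to a fixed point), so $x_n/\|x_n\|\to 1$; yet $\ran(T-\Id)=\left]0,1\right]$ is bounded, whence $\big(\ran(T-\Id)\big)^\infty=\rec\big([0,1]\big)=\{0\}$ and $R=\{0\}$, which contains no unit vector. The theorem --- and your telescoping proof of it --- becomes correct if $\big(\ran(T-\Id)\big)^\infty$ is replaced by $\ccone\big(\ran(T-\Id)\big)$, which equals $\ccone\big(\cran(T-\Id)\big)$ by \cite[Lemma~4]{Pazy}; with $R$ so redefined, \ref{t:Tiii} survives verbatim, and \cref{t:0427} is unaffected since there the corresponding ranges already have conical closures.
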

\begin{proof}
By \cref{f:Pazy}, $\|x_n\|\to\infty$; thus, the quotient sequence
is eventually well defined. 
\ref{t:Ti}: 
Let $q$ be a cluster point of  $(x_n/\|x_n\|)_\nnn$.
It is clear that $q\in (\ran T)^\infty$. 
For every $\nnn$, we have
$x_{n+1}-x_0 = \sum_{k=0}^n (x_{k+1}-x_k) \in
(n+1)\ran(T-\Id) \subseteq \cone\ran(T-\Id)$;
hence, $(x_{n+1}-x_0)/\|x_{n+1}\|\in\cone\ran(T-\Id)$
and thus $q\in(\ran(T-\Id))^\infty$. 
Since $T$ is nonexpansive, $\cran(\Id-T)$ is convex (see
\cite[Lemma~4]{Pazy} which yields the right identity. 
\ref{t:Tii}:
Since $T$ is firmly nonexpansive, so is $\Id-T$ which implies
that $\cran(\Id-(\Id-T))=\cran T$ is convex (again by
\cite[Lemma~4]{Pazy}). 
The conclusion now follows because the horizon cone and recession
cone coincide for closed convex sets. 
\ref{t:Tiii}: This is clear. 
\end{proof}

The following result allows a reduction to lower-dimensional
cases. 

\begin{theorem}
\label{t:extend}
Let $Y$ be a linear subspace of $X$, and let $B\colon Y\To Y$ be
maximally monotone. Set $A := BP_Y$ and suppose that 
$T=J_A := (\Id+A)^{-1}$. 
Let $x\in X$.
Then the following hold:
\begin{enumerate}
\item 
\label{t:extendi}
$A\colon X\To X$ is maximally monotone and 
$T = P_{Y^\perp} + J_BP_Y$, where $J_B := (\Id+B)^{-1}$.
\item
\label{t:extendii}
$(\forall\nnn)$ $T^nx = P_{Y^\perp}x + J_B^n(P_Yx)$. 
\end{enumerate}
\end{theorem}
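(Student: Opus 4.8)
The plan is to establish \ref{t:extendi} by computing the resolvent $J_A$ by hand and then invoking Minty's theorem, and to deduce \ref{t:extendii} by a one-line induction. First I would check that $A=BP_Y$ is monotone on $X$: given $(x_1,u_1)$ and $(x_2,u_2)$ in $\gra A$, we have $u_1\in B(P_Yx_1)$, $u_2\in B(P_Yx_2)$, and $u_1,u_2\in Y$; since $P_{Y^\perp}(x_1-x_2)$ is orthogonal to $u_1-u_2$, the inner product $\scal{x_1-x_2}{u_1-u_2}$ collapses to $\scal{P_Yx_1-P_Yx_2}{u_1-u_2}$, which is nonnegative because $B$ is monotone on $Y$.

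Next I would solve the inclusion defining the resolvent. Fix $y\in X$ and seek $x$ with $y\in x+Ax=x+B(P_Yx)$. Because $\ran A\subseteq Y$, the vector $y-x$ must lie in $Y$, which forces $P_{Y^\perp}x=P_{Y^\perp}y$; the remaining condition reads $P_Yy\in P_Yx+B(P_Yx)=(\Id+B)(P_Yx)$, i.e.\ $P_Yx=J_B(P_Yy)$. This is exactly where maximal monotonicity of $B$ on $Y$ is used: by Minty's theorem (see, e.g., \cite{BC2011}) we have $\ran(\Id+B)=Y$, so $J_B$ is single-valued and defined on all of $Y$, and $P_Yy$ lies in its domain. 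Reassembling the two orthogonal components gives $x=P_{Y^\perp}y+J_B(P_Yy)$, and a short back-substitution confirms $y\in x+Ax$. This simultaneously shows that $\ran(\Id+A)=X$ and that $T=J_A=P_{Y^\perp}+J_BP_Y$; combining monotonicity of $A$ with this range condition and applying Minty's theorem once more (now on $X$) yields maximal monotonicity of $A$, finishing \ref{t:extendi}.

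For \ref{t:extendii} I would induct on $n$, the base case $n=0$ being just the decomposition $x=P_{Y^\perp}x+P_Yx$. For the step, apply the formula $Tz=P_{Y^\perp}z+J_B(P_Yz)$ from \ref{t:extendi} to $z:=T^nx=P_{Y^\perp}x+J_B^n(P_Yx)$. The point is that this $z$ is already split along $X=Y\oplus Y^\perp$: its $Y^\perp$-part is $P_{Y^\perp}x$ and its $Y$-part is $J_B^n(P_Yx)\in Y$, so $P_{Y^\perp}z=P_{Y^\perp}x$ and $P_Yz=J_B^n(P_Yx)$. Substituting gives $T^{n+1}x=P_{Y^\perp}x+J_B^{n+1}(P_Yx)$, as required.

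The main obstacle is the maximal monotonicity assertion in \ref{t:extendi}: monotonicity of $A$ is routine, but upgrading to maximality hinges on the surjectivity $\ran(\Id+A)=X$. I expect the explicit resolvent computation to deliver this cleanly, provided I invoke Minty's theorem correctly in each ambient space---on $Y$ to know $J_B$ is everywhere defined, and on $X$ to conclude maximality from the range condition. The one piece of bookkeeping to watch is tracking which space each resolvent lives in, together with confirming single-valuedness of $J_A$, which follows from the strict monotonicity estimate $-\norm{x_1-x_2}^2\geq 0$ applied to two solutions of the resolvent inclusion.
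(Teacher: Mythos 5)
Your proof is correct, and it reaches the same conclusion by what is in substance the same route, but it is self-contained where the paper is not: the paper's entire proof of \ref{t:extendi} is a citation of \cite[Proposition~23.23]{BC2011}, which is exactly the statement you prove by hand, and its proof of \ref{t:extendii} is the same one-line induction you give. Concretely, your chain --- monotonicity of $A=BP_Y$ via the orthogonal splitting $\scal{x_1-x_2}{u_1-u_2}=\scal{P_Yx_1-P_Yx_2}{u_1-u_2}$, the observation that $\ran A\subseteq Y$ forces $P_{Y^\perp}x=P_{Y^\perp}y$ in the inclusion $y\in x+B(P_Yx)$, Minty's theorem on $Y$ (note $Y$ is automatically closed since $X$ is finite-dimensional, so $P_Y$ and the Hilbert-space structure of $Y$ are unproblematic) to get $J_B$ single-valued with full domain, and Minty's theorem on $X$ to convert $\ran(\Id+A)=X$ plus monotonicity into maximality --- is a correct proof of the cited proposition. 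What your version buys is transparency: it shows exactly where maximal monotonicity of $B$ enters (surjectivity of $\Id+B$ on $Y$) and removes the dependence on the reference; what the citation buys is brevity. Your induction for \ref{t:extendii} correctly exploits that $T^nx$ is already split along $Y\oplus Y^\perp$, so that $P_{Y^\perp}(T^nx)=P_{Y^\perp}x$ and $P_Y(T^nx)=J_B^n(P_Yx)$, which is precisely what the paper means by ``clear from \ref{t:extendi} and induction.'' One small simplification: your closing remark about deriving single-valuedness of $J_A$ from the estimate $-\norm{x_1-x_2}^2\geq 0$ is redundant, since your resolvent computation already pins down both components of any solution $x$ uniquely (and single-valuedness of the resolvent is automatic once $A$ is known to be monotone).
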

\begin{proof}
\ref{t:extendi}:
This follow from \cite[Proposition~23.23]{BC2011}.
\ref{t:extendii}: Clear from \ref{t:extendi} and induction. 
\end{proof}

We are now in a position to obtain a positive result for
proximity operators of certain convex functions. 

\begin{corollary}
\label{c:extend}
Let $f\colon X\to\RX$ be convex, lower semicontinuous, and proper
on $\RR$, let $a\in X$ such that $\|a\|=1$, set
$F\colon X\to \RX\colon x\mapsto f(\scal{a}{x})$,
and suppose that $T= P_F := (\Id+\partial F)^{-1}$ is the
associated proximity operator. 
Let $x\in X$. 
Then 
\begin{equation}
(\forall\nnn)\quad T^nx = P_{\{a\}^\perp}(x) +
P_f^n(\scal{a}{x})a,
\end{equation}
where $P_f := (\Id+\partial f)$ is the
proximity operator of $f$. 
Consequently, if $f$ is bounded below but without minimizers, then $T$
admits cosmic convergence and $(Q_n(x))_\nnn =
(T^nx/\|T^nx\|)_\nnn$ converges either to $+a$ or to $-a$.
\end{corollary}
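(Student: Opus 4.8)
The plan is to realise $F$ as a one-dimensional problem embedded in $X$ via \cref{t:extend}, and then to invoke the one-dimensional trichotomy of \cref{t:1D} applied to the proximity operator $P_f$ on $\RR$. First I would set $Y := \RR a$, a one-dimensional subspace (since $\|a\|=1$), and identify $Y$ isometrically with $\RR$ through $t\mapsto ta$; under this identification $P_Yx = \scal{a}{x}a$ corresponds to $\scal{a}{x}$ and $Y^\perp = \{a\}^\perp$. Because $x\mapsto\scal{a}{x}$ is linear with adjoint $t\mapsto ta$, the subdifferential chain rule yields $\partial F(x) = (\partial f(\scal{a}{x}))a$. Setting $B := \partial f$, regarded as a maximally monotone operator on $Y\cong\RR$, this identity reads precisely $\partial F = BP_Y$, so that $A := \partial F$ and $T = P_F = J_A$ satisfy the hypotheses of \cref{t:extend}. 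Part~\ref{t:extendii} of that theorem then gives $T^nx = P_{Y^\perp}x + J_B^n(P_Yx)$; translating back through the identification---where $J_B = (\Id+\partial f)^{-1} = P_f$---produces the asserted formula $T^nx = P_{\{a\}^\perp}(x) + P_f^n(\scal{a}{x})a$.

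For the consequence, I would first note that $\Fix P_f$ coincides with the set of minimizers of $f$, so the assumption that $f$ has no minimizers gives $\Fix P_f = \varnothing$; since $P_f\colon\RR\to\RR$ is nonexpansive (being a proximity operator), \cref{t:1D} applies and delivers exactly one of two alternatives, \emph{independently of the starting value} $\scal{a}{x}$: either $P_f^n(t)\to\pinf$ for all $t$, or $P_f^n(t)\to\minf$ for all $t$. The additional hypothesis that $f$ is bounded below is what situates $T$ in the genuinely interesting Pazy case $0\in\cran(\Id-T)\smallsetminus\ran(\Id-T)$, rather than being needed for convergence itself.

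Finally I would substitute into the formula. Writing $u := P_{\{a\}^\perp}(x)$ (a fixed vector orthogonal to $a$) and $t_n := P_f^n(\scal{a}{x})$, we have $T^nx = u + t_na$, and since $u\perp a$ and $\|a\|=1$, $\|T^nx\| = \sqrt{\|u\|^2+t_n^2}$. As $|t_n|\to\infty$ the component along $a$ dominates, so $Q_n(x) = (u+t_na)/\sqrt{\|u\|^2+t_n^2}\to +a$ in the first alternative and $\to -a$ in the second, which is the claimed cosmic convergence. I expect the only real obstacle to be the bookkeeping of the isometric identification $Y\cong\RR$ in the first step---checking that $J_B$, $P_Y$, and $P_{Y^\perp}$ correspond correctly to $P_f$, $\scal{a}{\cdot}$, and $P_{\{a\}^\perp}$---after which the convergence reduces to the elementary limit above.
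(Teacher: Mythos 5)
Your proposal is correct and takes essentially the same route as the paper's proof: reduce to one dimension via \cref{t:extend} with $Y=\RR a$ and $B=\partial f$ viewed as a maximally monotone operator on $Y$ (the paper encodes this identification through $\varphi\colon Y\to\RX\colon \xi a\mapsto f(\xi)$, so that $\partial F=(\partial\varphi)\circ P_Y$), and then conclude with \cref{t:1D}. Your two extra touches---the explicit limit computation showing $\bigl(u+t_na\bigr)/\sqrt{\|u\|^2+t_n^2}\to\pm a$, and the observation that boundedness below merely places $T$ in Pazy's case \cref{f:Pazy}\ref{f:Pazy2} while fixed-point-freeness of $P_f$ alone drives the convergence---are sound refinements of the paper's more terse conclusion.
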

\begin{proof}
Set $Y := \RR a$ and $\varphi\colon Y\to\RX\colon \xi a\mapsto
f(\xi)$. 
Then $F = \varphi \circ P_Y$ and $\partial F =
(\partial\varphi)\circ P_Y$. 
By \cref{t:extend}, $Tx = P_{\{a\}^\perp}(x) +
P_{\varphi}(\scal{a}{x}a) = P_{\{a\}^\perp}(x) +
P_{f}(\scal{a}{x})a$. 
Concerning the ``Consequently'' part, observe that if $f$ is
bounded below but without minimizers, then
$0\in\cran(\Id-P_f)\smallsetminus\cran(\Id-P_f)$ and the result
follows from \cref{t:1D}. 
\end{proof}

We conclude this section with two examples:
the first is covered by our analysis but the second is not. 

\begin{example}
Suppose that $X=\RR^2$ and set
\begin{equation}
F\colon \RR^2\to\RX\colon
(\xi_1,\xi_2)\mapsto 
\begin{cases}
\frac{1}{\xi_1+\xi_2}, &\text{if $\xi_1+\xi_2>0$;}\\
\pinf, &\text{otherwise,}
\end{cases}
\end{equation}
and suppose that $T=P_F$. 
Set $a:=(1,1)/\sqrt{2}$, and $f(\xi)=1/(\sqrt{2}\xi)$, if
$\xi>0$ and $f(\xi)=\pinf$ otherwise. Then \cref{c:extend}
applies and we obtain cosmic convergence; indeed, 
\begin{equation}
Q_n(x) = \frac{T^n(x)}{\|T^n(x)\|}\to a.
\end{equation}
\end{example}

\begin{example}
Suppose that $X=\RR^2$, set
\begin{equation}
F\colon \RR^2\to\RX\colon
(\xi_1,\xi_2)\mapsto 
\begin{cases}
\frac{\exp(\xi_1)}{\xi_2}, &\text{if $\xi_2>0$;}\\
\pinf, &\text{otherwise,}
\end{cases}
\end{equation}
and suppose that $T=P_F$. 
Then $F$ is \emph{not} of a form that makes \cref{c:extend}
applicable.
Interestingly, numerical experiments 
suggest that 
\begin{equation}
Q_n(x) = \frac{T^nx}{\|T^nx\|} \to (-1,0);
\end{equation}
however, we do not have a proof for this conjecture.
\end{example}

\subsection{Poincar\'e metric and cosmic interpretation}

\label{ss:cosmic}

In this section, we provide a different interpretation of our
convergence results which also motivates the terminology ``cosmic
convergence'' used above. 
We first observe that $X$ can be equipped with the 
\emph{Poincar\'e metric}, which is defined by
\begin{equation}
\Delta\colon X\to X\to\RR\colon (x,y) \mapsto  \left\| \frac{x}{1+\|x\|} - \frac{y}{1+\|y\|}
\right\|.
\end{equation}
Note that $\Delta$ is just the standard Euclidean metric after
the bijection $x\mapsto x/(1+\|x\|)$ between $X$ and the
\emph{open} unit ball was applied. 
The metric space $(X,\Delta)$ is not complete; however, regular
convergence of sequences in the Euclidean space $X$ is preserved. 
To complete $(X,\Delta)$, define the equivalence relation
\begin{equation}
x\equiv y 
\;\;:\Leftrightarrow\;\;
x\in\RPP y
\end{equation}
on $X\smallsetminus\{0\}$, with equivalence class
\begin{equation}
\dir x := \RPP x 
\end{equation}
for $x\in X\smallsetminus\{0\}$. 
Following \cite{Rock98}, we write 
\begin{equation}
\hzn X := \menge{\dir x}{x\in X\smallsetminus \{0\}}
\;\;\text{and}\;\;
\csm X := X \cup \hzn X.
\end{equation}
Here $\hzn$ is the \emph{horizon} of $X$ while $\csm X$
denotes the \emph{cosmic closure} of $X$. 
A convenient representer of $\dir x$ is $x/\|x\|$. 
These particular representers form the unit \emph{sphere}
which we can think of adjoining to the open unit ball. 
More precisely, we extend $\Delta$ from $X\times X$
to $\csm X\times \csm X$ as follows:
\begin{equation}
\left.\begin{matrix} x\in X\\ \dir y \in \hzn X\end{matrix}
\right\}
\;\;\Rightarrow\;\;
\Delta(x,\dir y) := \Delta(\dir y,x) := \left\| \frac{x}{1+\|x\|}
- \frac{y}{\|y\|}\right\|.
\end{equation}
and 
\begin{equation}
\left.\begin{matrix} \
\dir x\in \hzn X\\ \dir y \in \hzn X\end{matrix}
\right\}
\;\;\Rightarrow\;\;
\Delta(\dir x,\dir y) := \Delta(\dir y,\dir x) := \left\| \frac{x}{\|x\|}
- \frac{y}{\|y\|}\right\|.
\end{equation}
Equipped with $\Delta$, the Bolzano-Weierstrass theorem implies
that the cosmic closure $\csm X$ is a (sequentially) 
\emph{compact} metric space; in particular,
any sequence $(x_n)_\nnn$ in $X$ such that $\|x_n\|\to\pinf$ has
a convergent subsequence in $(\csm X,\Delta)$. 
In the previous sections, we concentrated on the case when 
$(x_n)_\nnn = (T^nx)_\nnn$ and $\Fix T=\varnothing$; then, of
course, it may or may not
be true that the entire sequence converges in $(\csm X,\Delta)$. 
This provides an \emph{a posteriori} motivation for our
terminology.

\subsection{Conclusion}

We have taken a closer look at Pazy's trichotomy
theorem for nonexpansive operators. The question whether or
not $(T^nx)_\nnn$ always cosmically converges when $T$ has no
fixed points remains open;
however, we have presented various partial results indicating that the
answer may be affirmative. 
Future work may focus on analyzing larger classes of nonexpansive
operators, e.g., general proximity operators or averaged
operators. 
Another promising avenue may be to use tools from non-euclidean
geometry (see \cite{GR}). 
Furthermore, it is presently unclear how the presented results extend
to infinite-dimensional settings. 


\end{document}